\setlist[description]{%
  font={\rmfamily\mdseries \dashuline}, 
}
\newcommand{\kk}{\Bbbk} 
\newcommand{\II}{\mathbb{I}}
\newcommand{\lto}{\longrightarrow}
\newcommand{\lmapsto}{\longmapsto}
\newcommand{\isoto}{\xrightarrow{\sim}}
\newcommand{\lisoto}{\xlongrightarrow{\sim}}
\newcommand{\eps}{\varepsilon}
\newcommand{\id}[1]{\operatorname{id}_{#1}}
\newcommand{\Hom}[2]{ \operatorname{Hom}_{#1}\! \left( #2 \right) }
\newcommand{\intHom}[1]{ \operatorname{\underline{Hom}}\! \left( #1 \right) }
\newcommand{\intHomwoarg}{ \operatorname{\underline{Hom}} }
\newcommand{\Nat}[1]{ \operatorname{Nat}\! \left( #1 \right) }
\newcommand{\ev}{\operatorname{ev}}
\newcommand{\coev}{\operatorname{coev}}
\newcommand{\lcoev}{\widetilde{\operatorname{coev}}}
\newcommand{\leftdual}[1]{\prescript{\vee \! }{}{#1}}
\newcommand{\lact}{\triangleright}
\newcommand{\ract}{\triangleleft}
\newcommand{\cattr}{\operatorname{tr}}
\newcommand{\twcenter}{\operatorname{\widetilde{\mathcal{Z}}}}
\newcommand{\tild}[1]{{\widetilde{#1}}}
\newcommand{\mopp}{{\otimes\! \operatorname{op}}}
\newcommand{\opp}{{\operatorname{op}}} 
\newcommand{\coopp}{{\operatorname{cop}}}
\newcommand{\opcop}{{\opp\coopp}}
\newcommand{\cat}[1]{\mathcal{#1}}
\newcommand{\A}{\cat{A}}
\newcommand{\C}{\cat{C}}
\newcommand{\M}{\cat{M}}
\newcommand{\N}{\cat{N}}
\newcommand{\funre}[1]{\operatorname{Fun_{#1}^{r.e.}}}
\newcommand{\funbalre}[1]{\operatorname{Fun_{#1}^{bal, r.e.}}}
\newcommand{\lmodin}[2]{{#2}\mathsf{\operatorname{{--mod}}}_{#1}}
\newcommand{\lmod}[1]{{#1}\!\operatorname{--mod}}
\newcommand{\rmodin}[2]{\operatorname{mod}_{#1}\text{--}{#2}}
\newcommand{\lcomod}[1]{\prescript{#1}{}{\mathscr{M}}}
\newcommand{\bicomod}[2]{\prescript{#1}{}{\mathscr{M}}^{#2}}
\newcommand{\genHopf}[4]{\prescript{#1}{#3}{\mathscr{M}}^{#2}_{#4}}
\newcommand{\modmon}[2]{{#1}^{#2}}
\newtheoremstyle{indented}{3pt}{3pt}{\addtolength{\leftskip}{5.5em}}{}{\bfseries}{.}{.5em}{}
\theoremstyle{plain}
\newcounter{dummy} 
\newtheorem{theorem}[dummy]{Theorem}
\newtheorem{proposition}[dummy]{Proposition}
\newtheorem{lemma}[dummy]{Lemma}
\theoremstyle{definition}
\newtheorem{definition}[dummy]{Definition}
\theoremstyle{remark}
\newtheorem{remark}[dummy]{Remark}
\theoremstyle{indented}
\renewcommand\maketitle{
   \begin{center}
     {\LARGE\bfseries\@title\par\vspace{0.7em}}
     {\@date}
   \end{center}
}
\title{Category-valued traces for bimodule categories: a representation-theoretic realization}
\begin{document}

\begin{flushright}
\textsf{ZMP-HH/18-5}\\
\textsf{Hamburger Beiträge zur Mathematik Nr. 724}\\
\textsf{Februar 2018}
\end{flushright}

\vspace*{0.5cm}

\begin{center}
\LARGE \textbf{Category-valued traces for bimodule categories: a representation-theoretic realization} \normalsize \\[4ex] Vincent Koppen
\end{center}

\begin{center}
\emph{Fachbereich Mathematik,   Universität Hamburg}\\
\emph{Bereich Algebra und Zahlentheorie}\\
\emph{Bundesstra{\ss}e 55,   D -- 20146  Hamburg}\\
Email: \ \texttt{vincent.koppen@uni-hamburg.de}
\end{center}

\begin{abstract}
\noindent \textbf{Abstract.} 
The category-valued trace assigns to a bimodule category over a linear monoidal category a linear category.
It generalizes Drinfeld centers of monoidal categories and the relative Deligne product of bimodule categories.
In this article, we study bimodule categories that are given as categories of bicomodules over a Hopf algebra.
Our main result is a representation-theoretic realization of the category-valued trace as a category of generalized Hopf bimodules.
\end{abstract}

\section{Introduction and outline}

The category-valued trace assigns to a finite linear bimodule category a finite linear category \cite{fuchs+schaumann+schweigert}.
It unifies several known constructions and generalizes the relative Deligne product as well as the Drinfeld center, see Definition \ref{def:twisted-center} below.
More precisely, for the special case of a tensor category, seen as a bimodule category over itself, the category-valued trace is a variant of the Drinfeld center, for which the right-action functor (i.e.\ tensoring from the right) is \emph{twisted} by the double-right-dual functor.

If the underlying tensor category is concretely given as the tensor category of finite-dimen\-sional modules (or, equivalently, comodules) over a finite-dimensional Hopf algebra $H$, then its Drinfeld center is canonically equivalent to the category of Yetter-Drinfeld modules over $H$.
This category is not only equivalent to the category of modules over the Drinfeld double $D(H)$ of $H$, but also has a description as Hopf bimodules over $H$ \cite{schauenburg, bespalov+drabant}.
This raises the question regarding the relation between more general category-valued traces and categories of Hopf bimodules.
In the present article we obtain such a result.
Without loss of generality, we assume that the bimodule category is given by a bicomodule algebra over a Hopf algebra.
We then show in Theorem \ref{thm:trace-of-bimodule-category-given-by-bicomodule-algebra} that its category-valued trace is canonically equivalent to the category of generalized Hopf bimodules over this bicomodule algebra.

We briefly outline the structure of the article.
In Section \ref{sec:notation-and-background} we provide pertinent background.
In particular we recall from \cite{fuchs+schaumann+schweigert} the definition of the twisted center and the balanced functor that turns it into a category-valued trace.
We also recall in Theorem \ref{thm:module-category-via-algebra} that finite module categories can always be represented as the category of modules over some algebra object in the underlying tensor category \cite{etingof+et_al, douglas+schommer-pries+snyder}. 
We explicitly spell out its consequence for a finite bimodule category over the tensor category of comodules over a Hopf algebra.

In Section \ref{sec:twisted-center-via-monad} we express in Proposition \ref{prop:twisted-center-via-twisted-central-monad} the twisted center of a bimodule category over a general rigid linear tensor category $\A$ as the category of modules over an algebra in the enveloping tensor category $\A \boxtimes \A^\mopp$.
In Section \ref{sec:hopf-bimodules-for-twisted-center} this result, together with our explicit description of a bimodule category in terms of a bicomodule algebra in the Hopf algebra case at the end of Section \ref{sec:notation-and-background}, leads to Theorem \ref{thm:trace-of-bimodule-category-given-by-bicomodule-algebra}, the main result of this article:
the category-valued trace of such a bimodule category is equivalent to a category of (relative) Hopf bimodules.
The latter notion straightforwardly generalizes ordinary Hopf bimodules, which were studied in \cite{schauenburg,bespalov+drabant}.
Finally, at the end of Section \ref{sec:hopf-bimodules-for-twisted-center} we explain how the equivalence of Yetter-Drinfeld modules and (ordinary) Hopf bimodules \cite{schauenburg, bespalov+drabant} can be interpreted from the point of view of this article.

\subsection*{Acknowledgements}
The author is very grateful to Christoph Schweigert for supervising this project and for providing constant encouragement and advice.
The author would also like to thank Ehud Meir for helpful comments on a draft version of this paper.
The author is supported by the RTG 1670 ``Mathematics inspired by String theory and Quantum
Field Theory''.

\section{Notation, background and preliminaries} \label{sec:notation-and-background}

We start by declaring the finiteness conditions under which we work and by introducing the notation and conventions that we use.
We consider a fixed field $\kk$ which we assume to be algebraically closed.
A \emph{finite $\kk$-linear category} is a $\kk$-linear category that is equivalent to the $\kk$-linear abelian category $\lmod{A}$ of finite-dimensional left modules over a suitable finite-dimensional $\kk$-algebra $A$.
We will frequently use the fact that a $\kk$-linear functor between finite $\kk$-linear categories is right- or left-exact if and only if it has a right or left adjoint, respectively.
For a proof see e.g.\ \cite[Prop.~1.8]{douglas+schommer-pries+snyder}.
A \emph{finite $\kk$-linear tensor category} is a finite $\kk$-linear category with the structure of a tensor category (a.k.a.\ a monoidal category) such that the tensor product functor is $\kk$-bilinear and every object has left and right duals (i.e. it is rigid).
It is known that then the tensor product is exact in both arguments \cite{etingof+et_al}.
A right dual $x^\vee$ of an object $x$ in a tensor category $\A$ is an object $x^\vee$ equipped with an evaluation morphism $\ev_x : x^\vee \otimes x \to \II$ and a co-evaluation morphism $\coev_x : \II \to x \otimes x^\vee$ which satisfy appropriate zig-zag identities, where by $\II$ we denote the tensor unit object of $\A$.
This fixes our conventions for the left dual $\leftdual{x}$ as well.

A \emph{finite left module category over $\A$} is a finite $\kk$-linear category $\M$ together with a left action functor $\lact : \A \times \M \to \M$, which is $\kk$-bilinear and exact in the first argument (and hence in both due to rigidity of $\A$), and natural isomorphisms $((a \otimes b) \lact m \isoto a \lact (b \lact m))_{a,b \in \A, m \in \M}$ and $(\II \lact m \to m)_{m \in \M}$, called \emph{module constraints}, which satisfy coherence conditions analogous to the ones for the associativity and unit constraints of a tensor category.
A right module category is analogously defined.
A finite $\A_1$-$\A_2$-bimodule category $\M$ is a finite left $\A_1$-module category and a finite right $\A_2$-module category that is additionally equipped with a natural isomorphism $((a_1 \lact m) \ract a_2 \isoto a_1 \lact (m \ract a_2))_{a_1 \in \A_1, a_2 \in \A_2, m \in \M}$, called \emph{bimodule constraint}, satisfying additional coherence conditions.
An equivalent structure \cite{greenough}
is that of a finite left module category over the Deligne product $\A_1 \boxtimes \A_2^\mopp$, where $\A_2^\mopp$ has the opposite tensor product with respect to $\A_2$ and corresponding adapted associativity and unit constraints.
The existence and natural induced monoidal structure of the Deligne product $\A_1 \boxtimes \A_2^\mopp$ are guaranteed, respectively, by the finiteness of the categories and by the field $\kk$ being algebraically closed \cite{deligne}.
If such a structure has underlying action functor $\ogreaterthan : (\A_1 \boxtimes \A_2^\mopp) \times \M \lto \M$, then the corresponding left and right action functors are
\begin{equation} \label{eq:left-and-right-action-functors}
\begin{aligned}
\lact : \cat{A}_1 \times \cat{M} \lto \cat{M}, \quad (a_1,m) \lmapsto (a_1 \boxtimes \II_{\cat{A}_2}) \ogreaterthan m,	\\
\ract : \cat{M} \times \cat{A}_2 \lto \cat{M}, \quad (m,a_2) \lmapsto (\II_{\cat{A}_1} \boxtimes a_2) \ogreaterthan m.
\end{aligned}
\end{equation}
We remark that the bimodule constraints are obtained from the natural structure of the Deligne product functor $\boxtimes : \A_1 \times \A_2 \lto \A_1 \boxtimes \A_2$ as a tensor functor.

\subsection{The twisted center of a bimodule category}
We will arrive at our main result (Theorem \ref{thm:trace-of-bimodule-category-given-by-bicomodule-algebra}) by employing a construction of the category-valued trace of a finite bimodule category in terms of modules over a certain monad.
In fact, we show that it is canonically isomorphic to the twisted center, which gives \cite{fuchs+schaumann+schweigert} a realization of the category-valued trace.
Therefore we first recall:

\begin{definition} \label{def:twisted-center}
Let $\M$ be a finite $\A$-bimodule category for a finite $\kk$-linear tensor category $\A = (\A, \otimes, \II)$ with right duals.
Then the \emph{twisted center} (or, more precisely, the \emph{$?^{\vee \vee}$-twisted center}) $\twcenter(\cat{M})$ of $\cat{M}$ is the $\kk$-linear category whose objects are pairs $(m,\gamma_m)$ consisting of an object $m \in \cat{M}$ and a natural isomorphism
\[	\left( \gamma_m(a) : a \lact m \xlongrightarrow{\sim} m \ract a^{\vee \vee}	\right)_{a \in \cat{A}} \]
such that the following hexagonal diagram commutes for all $a, b \in \cat{A}$ and $m \in \cat{M}$:
\begin{center}
\begin{tikzpicture}
\matrix (m) [matrix of math nodes,row sep=1.5em,column sep=4em,minimum width=2em]{
		&	(a \otimes b) \lact m	&	m \ract (a \otimes b)^{\vee\vee}	&		\\
		a \lact (b \lact m)	&	&	&	(m \ract a^{\vee\vee}) \ract b^{\vee\vee}	\\
		&	a \lact (m \ract b^{\vee\vee}) &	(a \lact m) \ract b^{\vee\vee}	&	\\
};
\path[-stealth]
		(m-2-1)	edge node[above] {}	(m-1-2)
				edge node[below, xshift=-12pt] {$a \lact \gamma_m(b)$}			(m-3-2)
		(m-1-2)	edge node[below] {$\gamma_m(a \otimes b)$}			(m-1-3)
		(m-1-3)	edge node[above] {}	(m-2-4)
		(m-3-2)	edge node[right] {}	(m-3-3)
		(m-3-3)	edge node[below, xshift=12pt] {$\gamma_m(a) \ract b^{\vee\vee}$}	(m-2-4)
	;
\end{tikzpicture}
\end{center}
The morphisms $(m,\gamma_m) \to (m',\gamma_{m'})$ in $\twcenter(\cat{M})$ are those morphisms $m \to m'$ in $\cat{M}$ which are compatible with the natural isomorphisms $\gamma_m$ and $\gamma_{m'}$ in the obvious way.
The twisted center comes with an obvious exact $\kk$-linear forgetful functor $U : \twcenter(\M) \lto \M$.
\end{definition}
We consider the twisted variant of the Drinfeld center since it satisfies a universal property with respect to balanced functors from the bimodule category $\M$ into $\kk$-linear categories.
An \emph{$\A$-balanced functor} $F : \cat{M} \lto \cat{C}$ with \emph{balancing constraint $\beta^F$} is a $\kk$-linear functor $F : \cat{M} \lto \cat{C}$ together with a family of natural isomorphisms
\[ \left( \beta^F_{m,a} : F(m \ract a) \lisoto F(a \lact m) \right)_{m \in \cat{M}, a \in \cat{A}} \]
such that the following diagrams involving the coherence isomorphisms of the bimodule category commute for all objects $a, b \in \cat{A}$ and $m \in \cat{M}$:
\begin{center}
\begin{tikzpicture}
	\matrix (m) [matrix of math nodes,row sep=1.5em,column sep=4em,minimum width=2em]{
		F((m \ract a) \ract b))	&	F(m \ract (a \otimes b))		\\
		F(b \lact (m \ract a))	&	F((a \otimes b) \lact m)		\\
		F((b \lact m) \ract a)	&	F(a \lact (b \lact m))		\\
	};
	\path[-stealth]
		(m-1-1)	edge node[left] {$\beta^F_{m\ract a,b}$}	(m-2-1)
				edge node[above] {}			(m-1-2)
		(m-2-1)	edge node[right] {}			(m-3-1)
		(m-3-1)	edge node[above] {$\beta^F_{b\lact m, a}$}	(m-3-2)
		(m-1-2)	edge node[right] {$\beta^F_{m,a\otimes b}$}	(m-2-2)
		(m-2-2)	edge node[left] {}			(m-3-2)
	;
\end{tikzpicture}
\begin{tikzpicture}
	\matrix (m) [matrix of math nodes,row sep=4em,column sep=3em,minimum width=2em]{
		F(m \ract \II)	&	F(\II \lact m)		\\
		F(m)						&								\\
	};
	\path[-stealth]
		(m-1-1)	edge node[left] {}	(m-2-1)
				edge node[above] {$\beta^F_{m,\II}$}	(m-1-2)
		(m-1-2)	edge node[right] {}	(m-2-1)
	;
\end{tikzpicture}
\end{center}
The left-adjoint $U^\ell : \M \to \twcenter(\M)$ of the forgetful functor $U : \twcenter(\M) \to \M$, which exists by exactness of $U$, has a natural $\A$-balancing constraint $\beta$ determined by the following commuting diagram for all $a \in \A$, $m \in \M$ and $z \in \twcenter(\M)$.
\begin{equation} \label{eq:balancing-via-yoneda}
\begin{aligned}
\begin{tikzpicture}
	\matrix (m) [matrix of math nodes,row sep=1.5em,column sep=5em,minimum width=2em]{
		\Hom{\cat{M}}{m,\prescript{\vee}{}{a} \lact U(z)}	&	\Hom{\cat{M}}{m,U(z) \ract a^\vee}	\\
		\Hom{\cat{M}}{a \lact m,U(z)}	&	\Hom{\cat{M}}{m \ract a,U(z)}	\\
		\Hom{\twcenter(\cat{M})}{U^\ell(a \lact m), z}	&	\Hom{\twcenter(\cat{M})}{U^\ell(m \ract a),z}	\\
	};
	\path[-stealth]
		(m-1-1)	edge node[below] {$\gamma({\leftdual{a}})_*$} node[above] {$\sim$}	(m-1-2)
				edge node[left] {$\cong$}	(m-2-1)
		(m-2-1)	edge node[left] {$\cong$}	(m-3-1)
		(m-3-1)	edge node[below] {$\beta_{m,a}^*$} node[above] {$\sim$}	(m-3-2)
		(m-1-2)	edge node[left] {$\cong$}	(m-2-2)
		(m-2-2)	edge node[left] {$\cong$}	(m-3-2)
	;
\end{tikzpicture}
\end{aligned}
\end{equation}

This diagram also demonstrates the need for the double dual in the definition of the twisted center.
The universal property satisfied by this data $(\twcenter(\M),U^\ell,\beta)$ is that, for an arbitrary $\kk$-linear category $\C$, the $\kk$-linear functor
\begin{align*}
? \circ U^\ell : \funre{\kk}(\twcenter(\M), \C) &\lto \funbalre{\kk}(\M,\C), \\
F &\lmapsto F \circ U^\ell,
\end{align*}
from the category of right-exact $\kk$-linear functors from the twisted center to the category of right-exact $\A$-balanced $\kk$-linear functors from $\M$ is an adjoint equivalence.
This was shown in \cite{fuchs+schaumann+schweigert}, where this universal property is employed as the definition of the {category-valued trace}:
\begin{definition}[\cite{fuchs+schaumann+schweigert}]
A \emph{category-valued trace} $(\cattr(\M),B_{\M},\Psi_{\M}, \varphi_{\M}, \kappa_{\M})$ of an $\A$-bimodule category $\M$ for a finite $\kk$-linear tensor category $\A$ is an abelian $\kk$-linear category $\cattr(\M)$ together with an $\A$-balanced functor $B_{\M} : \M \lto \cattr(\M)$ such that for every abelian $\kk$-linear category $\C$ the functor
\begin{align*}
\Phi_{\M;\C} :  \funre{\kk}(\cattr(\M), \C) &\lto \funbalre{\kk}(\M,\C), \\
F &\lmapsto F \circ B_{\M},
\end{align*}
is an equivalence of $\kk$-linear categories, and together with the following structure:
for any $\kk$-linear category $\C$ a specified quasi-inverse
\( \Psi_{\M;\C} \colon \funbalre{\kk}(\M,\C) \to \funre{\kk}(\cattr(\M), \C) \)
and an adjoint equivalence between $\Phi_{\M;\C}$ and $\Psi_{\M;\C}$ formed by $\varphi_{\M;\C} : \id{} \to \Phi_{\M;\C} \Psi_{\M;\C}$ and $\kappa_{\M;\C} : \Psi_{\M;\C} \Phi_{\M;\C} \to \id{}$.
(These choices always exist.)
\end{definition}
In the case of the $\A$-bimodule category $\M \boxtimes \N$ given by the Deligne product of a right $\A$-module category $\M$ and a left $\A$-module category $\N$, the universal property for the category-valued trace is precisely the one for the relative Deligne product $\M \boxtimes_{\A} \N$ \cite{douglas+schommer-pries+snyder}.

\subsection{Module categories in terms of algebras} \label{subsec:module-categories-via-algebras}
Any algebra $A$ in a monoidal category $\A$ gives rise to a left module category $\rmodin{\A}{A}$ over $\A$, where $\rmodin{\A}{A}$ is the $\kk$-linear category of right $A$-modules in $\A$ \cite{etingof+et_al}.
For a proof of the following converse to this statement see for example \cite[Theorem~2.24, pf.~of~Lemma~2.25]{douglas+schommer-pries+snyder}.

\begin{theorem} \label{thm:module-category-via-algebra}
Let $\A$ be a rigid finite $\kk$-linear tensor category and let $\cat{M}$ be a finite left module category over $\A$.
Then there exists an algebra $A$ in $\A$ together with an equivalence of $\A$-module categories $\cat{M} \simeq \rmodin{\cat{A}}{A}$.
\end{theorem}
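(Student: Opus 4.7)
The plan is to realize $\M$ as modules over the internal endomorphism algebra of a projective generator, via a standard Eilenberg--Watts-style argument adapted to the $\A$-module setting. Since $\M$ is finite $\kk$-linear it admits a projective generator $G$. The functor $- \lact G : \A \lto \M$ is $\kk$-linear and exact (using rigidity of $\A$ to get exactness on both sides), so by the adjoint principle recalled at the start of Section \ref{sec:notation-and-background} it has a right adjoint, which I denote $[G,-] : \M \lto \A$, characterized by $\Hom{\M}{a \lact G, m} \cong \Hom{\A}{a, [G, m]}$ naturally in $a$ and $m$.

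Set $A := [G, G]$. Standard abstract nonsense endows $A$ with the structure of an algebra in $\A$: the unit $\II \to A$ is adjoint to the module unit isomorphism $\II \lact G \isoto G$, and the multiplication $A \otimes A \to A$ is adjoint to $(A \otimes A) \lact G \isoto A \lact (A \lact G) \to A \lact G \to G$, obtained from the module associator and two applications of the counit. For every $m \in \M$ the object $[G, m]$ likewise carries a right $A$-action by an analogous adjunction argument, producing a $\kk$-linear functor $F := [G, -] : \M \lto \rmodin{\A}{A}$. Compatibility with the left $\A$-action, namely natural isomorphisms $F(a \lact m) \cong a \otimes F(m)$ as right $A$-modules, follows from the chain
\[
\Hom{\M}{b \lact G, a \lact m} \cong \Hom{\M}{(\leftdual{a} \otimes b) \lact G, m} \cong \Hom{\A}{\leftdual{a} \otimes b, [G,m]} \cong \Hom{\A}{b, a \otimes [G, m]},
\]
using rigidity of $\A$, together with uniqueness of representing objects.

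To prove $F$ is an equivalence I would construct the candidate left adjoint $L : \rmodin{\A}{A} \lto \M$ defined on objects by the relative tensor product $L(X) := X \otimes_A G$, realized as the coequalizer in $\M$ of the two evident maps $(X \otimes A) \lact G \rightrightarrows X \lact G$; such a coequalizer exists since $\M$ is abelian and $- \lact G$ is exact. The counit $L F(m) \to m$ is an isomorphism because $G$ generates $\M$, so every $m$ admits a presentation by objects of the form $a \lact G$ which $L \circ F$ preserves up to canonical iso. The unit $X \to F L(X)$ is an isomorphism first for $X = A$ by construction, then for every free $A$-module $a \otimes A$ via the projection-formula iso, and finally in general by presenting $X$ as a cokernel of free modules and invoking right-exactness of both $F$ (which uses projectivity of $G$) and $L$.

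The main obstacle is twofold. First, one must verify carefully that the algebra and $A$-module structures built via adjunction are compatible with the left $\A$-action, so that the result is genuinely an equivalence of $\A$-module categories and not merely of $\kk$-linear categories; the key ingredient here is the projection-formula isomorphism $[G, a \lact m] \cong a \otimes [G, m]$, which depends crucially on rigidity of $\A$. Second, in the unit step one must handle a general $A$-module $X$, for which one needs presentations by free $A$-modules to exist and to be preserved by $F$ and $L$; this is where the choice of $G$ as a \emph{projective} generator, rather than merely a generator, becomes essential, and it is also why a purely categorical argument avoiding the underlying algebra structure of $\M$ would be more delicate.
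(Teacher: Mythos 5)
Your proposal is correct and follows exactly the route the paper takes: it cites \cite[Theorem~2.24, pf.~of~Lemma~2.25]{douglas+schommer-pries+snyder} and, in Remark \ref{rem:thm-module-category-via-algebra}, describes precisely this construction --- choose a projective generator, take $A$ to be its internal endomorphism algebra, and use the internal hom functor as the equivalence. The only caveats are cosmetic: keep the dual conventions ($\leftdual{a}$ versus $a^\vee$) consistent in the projection-formula chain, and note explicitly that right-exactness of $[G,-]$ comes from $P\lact G$ being projective for projective $P$, which you already attribute to the projectivity of $G$.
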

\begin{remark} \label{rem:thm-module-category-via-algebra}
The algebra $A \in \A$ can be obtained as $A = \intHom{m,m} \in \A$ and the equivalence as $\intHom{m,?}$ for any projective generator $m \in \M$.
Here $\intHomwoarg: \M^\opp \times \M \to \A$ is the internal hom functor of the module category $\M$, which is determined by $\intHom{m,?}$ being the right adjoint to $?  \lact m $ for every $m \in \M$.
It exists for finite categories because the action functor is exact and therefore has, in particular, a right adjoint.
\end{remark}

Let $H_1$ and $H_2$ be finite-dimensional Hopf algebras over $\kk$ and denote by $\lcomod{H}$ for any Hopf algebra $H$ the $\kk$-linear ten\-sor category of $H$-comodules.
Now we apply Theorem \ref{thm:module-category-via-algebra} to $(\lcomod{H_1})$-$(\lcomod{H_2})$-bimodule categories.
An $(\lcomod{H_1})$-$(\lcomod{H_2})$-bimodule category is equivalently a left module category over \[ \lcomod{H_1} \boxtimes (\lcomod{H_2})^\mopp \simeq \lcomod{H_1 \otimes H_2^\opp} \cong \bicomod{H_1}{H_2^{\opcop}} \]
where the latter denotes the $\kk$-linear tensor category of finite-dimensional $H_1$-$H_2^\opcop$-bi\-co\-mod\-ules with tensor product over $\kk$.
$H_2^\opcop$ is the Hopf algebra obtained from $H_2$ by flipping both multiplication and co-multiplication.
(Actually we could simplify our description by using that $H_2$ is isomorphic to $H_2^\opcop$ as a Hopf algebra via the antipode, which is invertible for finite-dimensional Hopf algebras.
However, this is not canonical since any other odd power of the antipode also provides such an isomorphism.)

According to Theorem \ref{thm:module-category-via-algebra} an arbitrary finite left $(\bicomod{H_1}{H_2^{\opcop}})$-module category arises from an algebra in the tensor category $\bicomod{H_1}{H_2^{\opcop}}$, which is also called an \emph{$H_1$-$H_2^\opcop$-bicomodule algebra}.
Hence, let $B_{12} \in \bicomod{H_1}{H_2^{\opcop}}$ be an $H_1$-$H_2^\opcop$-bicomodule algebra and let us consider the $\kk$-linear category $\genHopf{H_1}{H_2^\opcop}{}{B_{12}}$, which is a left $(\bicomod{H_1}{H_2^{\opcop}})$-module category in a canonical way.
Considering it as an $(\lcomod{H_1})$-$(\lcomod{H_2})$-bimodule category we write down explicitly the underlying left and right action functors, as in \eqref{eq:left-and-right-action-functors}:
\begin{itemize}
\item The left $(\lcomod{H_1})$-action functor is
\begin{equation*}
\begin{aligned}
\lact : \genHopf{H_1}{}{}{} \times \genHopf{H_1}{H_2^\opcop}{}{B_{12}} &\lto \genHopf{H_1}{H_2^\opcop}{}{B_{12}},	\\
(X_1,M) &\lmapsto X_1 \lact M := (X_1 \boxtimes \kk) \ogreaterthan M,
\end{aligned}
\end{equation*}
where $(X_1 \boxtimes \kk) \ogreaterthan M \cong X_1 \otimes M$ as a vector space, with bicomodule structure
\begin{equation*}
\begin{aligned}
\delta_{X_1 \lact M} : X_1 \otimes M &\lto H_1 \otimes X_1 \otimes M \otimes H_2^\opcop, \\
x \otimes m &\lmapsto x_{(-1)} m_{(-1)} \otimes x_{(0)} \otimes m_{(0)} \otimes m_{(1)},
\end{aligned}
\end{equation*}
using Sweedler notation $X_1 \ni x \mapsto x_{(-1)} \otimes x_{(0)} \in H_1 \otimes X_1$ for a left $H_1$-comodule and $M \ni m \mapsto m_{(-1)} \otimes m_{(0)} \otimes m_{(1)} \in H_1 \otimes M \otimes H_2^\opcop$ for an $H_1$-$H_2^\opcop$-bicomodule.
\item The right $(\lcomod{H_2})$-action functor is
\begin{equation*}
\begin{aligned}
\ract : \genHopf{H_1}{H_2^\opcop}{}{B_{12}} \times \genHopf{H_2}{}{}{} &\lto \genHopf{H_1}{H_2^\opcop}{}{B_{12}},	\\     
(M,X_2) &\lmapsto M \ract X_2 := (\kk \boxtimes X_2) \ogreaterthan M,
\end{aligned}
\end{equation*}
where $(\kk \boxtimes X_2) \ogreaterthan M \cong M \otimes X_2$ as a vector space, with bicomodule structure
\begin{equation*}
\begin{aligned}
\delta_{M \ract X_2} : M \otimes X_2 &\lto H_1 \otimes M \otimes X_2 \otimes H_2^\opcop, \\
m \otimes x &\lmapsto m_{(-1)} \otimes m_{(0)} \otimes x_{(0)} \otimes m_{(1)} x_{(-1)},
\end{aligned}
\end{equation*}
where, when we write $m_{(1)} x_{(-1)}$, we are using the multiplication in $H_2$ and not in $H_2^\opcop$.
\end{itemize}

\section{The twisted center as the category of modules over a monad} \label{sec:twisted-center-via-monad}

In this section we prove a result (Theorem \ref{thm:category-valued-trace-as-monad-modules}) which holds for bimodule categories over a general $\kk$-linear tensor category $\A$.
In the subsequent section we will then specialize to the case of comodules over a Hopf algebra, $\A = \lcomod{H}$.

In \cite{day+street} it is shown that the Drinfeld center of a tensor category $\A$ is equivalent to the category of modules over the monad $x \mapsto \int^{a \in \cat{A}} a \otimes x \otimes \leftdual{a}$ on $\A$, called the central monad.
For the definition of a monad see \cite{mac_lane}.
A \emph{module} over a monad $T: \C \to \C$ (also called a $T$-algebra) is an object $M \in \C$ together with a morphism $\rho : T(M) \to M$ satisfying associativity and unitality laws with respect to the monad $T$.
We denote by $\modmon{\C}{T}$ the category of such modules.
Here we straightforwardly generalize the result of \cite{day+street} to arbitrary bimodule categories $\M$ over $\A$ and, since we are interested in the twisted center, we instead use the twisted central monad on $\M$ given by $m \mapsto \int^{a \in \cat{A}} a \lact m \ract a^\vee$, involving the right dual instead of the left dual.
This does not require any essential changes to the proof of the original result.

We define the twisted central monad on $\M$ by defining an algebra $\tild{A}$ in the enveloping tensor category $\A \boxtimes \A^\mopp$.
As an object define $\tild{A} := \int^{a \in \A} a \boxtimes a^\vee \in \A \boxtimes \A^\mopp$.
If $\A$ is finite then this coend exists (e.g. \cite[Theorem~3.6]{shimizu}) because its underlying functor is $\kk$-linear and exact in both arguments.
To define a multiplication morphism $ \mu_{\tild{A}} : \tild{A} \otimes \tild{A} \lto \tild{A} $ denote the universal dinatural transformation for $\tild{A}$ by $j$ and note that the dinatural transformation given for $a,b \in \A$ by
\begin{equation*} \label{eq:fubini-coend}
	j^{(2)}(a,b) : (a \otimes b) \boxtimes (a \otimes b)^\vee \cong (a \boxtimes a^\vee) \otimes (b \boxtimes b^\vee) \xlongrightarrow{j(a) \otimes j(b)} \tild{A} \otimes \tild{A}
\end{equation*}
makes $\tild{A} \otimes \tild{A}$ into a coend $\int^{(a,b) \in \A \times \A} (a \otimes b) \boxtimes (a \otimes b)^\vee$ by the Fubini theorem for coends and by a compatibility with tensor products.
Then the multiplication $\mu_{\tild{A}}$ is defined as the unique morphism such that for all $a,b \in \cat{A}$
\begin{equation} \label{eq:algebra-structure-on-coend}
\mu_{\tild{A}} \circ j^{(2)}(a,b) = j(a \otimes b)
\end{equation}
The unit for this multiplication is given by $j(\II) : \II \boxtimes \II \to \tild{A}$.
For a proof of associativity of the algebra we just defined compare with a similar algebra defined in \cite[Section~4.3]{shimizu}.

\begin{definition}
Let $\M$ be a finite bimodule category over a rigid finite $\kk$-linear tensor category $\A$.
Then we call the monad induced by the above algebra $(\tilde{A}, \mu_{\tild{A}})$ in $\A \boxtimes \A^\mopp$, with underlying endofunctor
\begin{align*}
	\widetilde{Z} \colon \cat{M}  &\lto \cat{M},	\\
	m &\lmapsto \tilde{A} \ogreaterthan m \cong \int^{a \in \cat{A}} (a \lact m) \ract a^\vee,
\end{align*}
and multiplication $\tilde{\mu} := \mu_{\tild{A}} \ogreaterthan \id{} : (\tild{A}\otimes \tild{A}) \ogreaterthan ? \Longrightarrow \tild{A} \ogreaterthan ?$,
the \emph{twisted central monad} on $\M$.
\end{definition}
\begin{remark}
Here we have used the natural isomorphism $\tild{A} \ogreaterthan m \cong \int^{a \in \cat{A}} (a \lact m) \ract a^\vee$, which follows from the fact that the action functor $? \ogreaterthan m : \A \to \M$ is cocontinuous because it is by assumption right-exact.
Let us denote by $i_m := j \ogreaterthan \id{m}$ the universal dinatural transformation for $\tild{A} \ogreaterthan m \cong \int^{a \in \cat{A}} (a \lact m) \ract a^\vee$.
\end{remark}

Then we have:

\begin{proposition} \label{prop:twisted-center-via-twisted-central-monad}
Let $\cat{A}$ be a rigid finite $\kk$-linear tensor category and let $\cat{M}$ be a finite $\cat{A}$-bimodule category.
Then there is a canonical equivalence of $\kk$-linear categories between the category $\modmon{\cat{M}}{\tild{Z}}$ of modules over the twisted central monad $\tild{Z}$ on $\cat{M}$ and the twisted center $\twcenter(\cat{M})$ of $\cat{M}$. This equivalence fits into a strictly commutative triangle with the forgetful functors of $\modmon{\cat{M}}{\tild{Z}}$ and $\twcenter(\cat{M})$ to $\cat{M}$:
\begin{center}
\begin{tikzpicture}
\matrix (m) [matrix of math nodes,row sep=2em,column sep=2em,minimum width=2em]{
	\modmon{\cat{M}}{\tild{Z}}	&		&	\twcenter(\cat{M})	\\
								&	\M	&						\\
};
\path[-stealth]
	(m-1-1)	edge node[above] {$\cong$}	(m-1-3)
			edge node[below left] {}	(m-2-2)
	(m-1-3)	edge node[below right] {}	(m-2-2)
;
\end{tikzpicture}
\end{center}
\end{proposition}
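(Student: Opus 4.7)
The plan is to construct mutually inverse functors $F: \twcenter(\M) \to \modmon{\M}{\tild{Z}}$ and $G: \modmon{\M}{\tild{Z}} \to \twcenter(\M)$ which are strictly compatible with the forgetful functors to $\M$. The construction chains two natural bijections.

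First, by the universal property of the coend $\tild{Z}(m) = \int^{a \in \A}(a \lact m) \ract a^\vee$, morphisms $\rho: \tild{Z}(m) \to m$ correspond bijectively to dinatural families $\bigl(\rho(a): (a \lact m) \ract a^\vee \to m\bigr)_{a \in \A}$, via $\rho(a) = \rho \circ i_m(a)$. Second, rigidity gives that $? \ract a^\vee$ is left adjoint to $? \ract a^{\vee\vee}$ (naturally in $a$), yielding
\[
\Hom{\M}{(a \lact m) \ract a^\vee, m} \cong \Hom{\M}{a \lact m, m \ract a^{\vee\vee}},
\]
under which the dinatural family $\rho(a)$ is sent to a family $\gamma_m(a): a \lact m \to m \ract a^{\vee\vee}$ natural in $a$. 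Composing these bijections defines $F$ on objects by $(m,\gamma_m) \mapsto (m,\rho)$ and $G$ by $(m,\rho) \mapsto (m,\gamma_m)$; both fix the underlying object $m$, so the triangle commutes strictly on the nose.

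Next I would verify that the monad module axioms translate to the structural conditions of Definition \ref{def:twisted-center}. Unitality of $\rho$, with respect to the monad unit $j(\II) \ogreaterthan \id{m}$, corresponds (up to the unit constraints of $\M$) to $\gamma_m(\II) = \id{m}$. For associativity, I apply both $\rho \circ \tild{\mu}_m$ and $\rho \circ \tild{Z}(\rho)$ to the Fubini universal dinatural transformation $i_m^{(2)}(a,b): (a \otimes b) \lact m \ract (a \otimes b)^\vee \to \tild{Z}\tild{Z}(m)$ and use the defining equation \eqref{eq:algebra-structure-on-coend} for $\mu_{\tild{A}}$: the right-hand side reduces to $\rho(a \otimes b)$ while the left-hand side reduces to the composite passing through $\rho(b)$ and then $\rho(a)$. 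Transporting this equality across the adjunction yields precisely the hexagon for $\gamma_m$. The morphism-level compatibility (intertwining the extra structure) matches tautologically on both sides.

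The main obstacle is that Definition \ref{def:twisted-center} requires $\gamma_m(a)$ to be a natural \emph{isomorphism}, whereas the dictionary above a priori only produces a natural transformation. I would construct the inverse explicitly using the rigid duality on $\A$, as the composite
\[
\gamma_m(a)^{-1}: m \ract a^{\vee\vee} \xrightarrow{\coev_a \lact \id{}} a \lact a^\vee \lact m \ract a^{\vee\vee} \xrightarrow{\id{} \lact \gamma_m(a^\vee) \ract \id{}} a \lact m \ract a^{\vee\vee\vee} \ract a^{\vee\vee} \xrightarrow{\id{} \ract \ev_{a^{\vee\vee}}} a \lact m,
\]
with the associativity and unit constraints of $\M$ inserted as needed. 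Checking that this is a two-sided inverse to $\gamma_m(a)$ reduces, via the hexagon applied to the pairs $(a,a^\vee)$ and $(a^\vee,a)$, to the zig-zag identities for the duality $\ev_a, \coev_a$ in $\A$. With invertibility in hand, $F$ and $G$ are mutually inverse by construction of the underlying bijections, and the proposition follows.
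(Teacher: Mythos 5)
Your proposal is correct and takes essentially the same route as the paper: the bijection between $\Hom{\M}{\tild{Z}(m),m}$ and $\Nat{?\lact m,\ m \ract\ ?^{\vee\vee}}$ obtained from the coend's universal property together with the adjunction $?\ract a^{\vee} \dashv\ ?\ract a^{\vee\vee}$ is exactly the paper's construction, which simply writes out the resulting composites with $\coev_{a^{\vee}}$, $i_m(a)$ and $\ev_{a^{\vee}}$. The paper declares the translation of the module axioms into invertibility plus the hexagon to be straightforward, and your sketch (unitality $\leftrightarrow \gamma_m(\II)=\id{m}$, associativity $\leftrightarrow$ hexagon via the Fubini dinatural family, and the explicit inverse built from $\gamma_m(a^{\vee})$ checked with the zig-zag identities and naturality at $\ev_a$, $\coev_a$) fills in precisely that omitted verification.
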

\begin{proof}
Since the proof is analogous to the one for the center of a tensor category \cite{day+street} we will restrict ourselves to providing the mutually inverse category equivalences, which we will need to refer to in the proof of Theorem \ref{thm:category-valued-trace-as-monad-modules}.

In fact, for any object $m \in \M$ there is a bijection between $\tild{Z}$-module structures $\rho : \tild{Z}(m)=\int^{a \in \cat{A}} (a \lact m) \ract a^\vee \to m$ and natural isomorphisms $\left(	a \lact m \lto m \ract a^{\vee\vee} \right)_{a \in \cat{A}}$ satisfying the hexagon axiom.
We specify this bijection:

Given a morphism $\rho : \tild{Z}(m) \lto m$ we define a natural transformation $\left(	a \lact m \lto m \ract a^{\vee\vee} \right)_{a \in \cat{A}}$ by the composition:
\begin{center}
\begin{tikzpicture}
	\matrix (m) [matrix of math nodes,row sep=2.5em,column sep=3.9em,minimum width=2em]{
		a \lact m	&	&	m \ract a^{\vee\vee}									\\
		(a \lact m) \ract \II	&	(a \lact m) \ract (a^\vee \otimes a^{\vee\vee}) \cong ((a \lact m) \ract a^\vee) \ract a^{\vee\vee}
								&	\tild{Z}(m) \ract a^{\vee\vee}						\\
	};
	\path[-stealth]
		(m-1-1)	edge node[above] {}											(m-1-3)
				edge node[left] {$\cong$}									(m-2-1)
		(m-2-1)	edge node[below] {\scalebox{0.8}{$\id{a \lact m} \ract \coev_{a^\vee}$}}		(m-2-2)
		(m-2-2)	edge node[below] {\scalebox{0.8}{$i_m(a) \ract \id{a^{\vee\vee}}$}}				(m-2-3)
		(m-2-3)	edge node[left] {$\rho \ract \id{a^{\vee\vee}}$}				(m-1-3)
	;
\end{tikzpicture}
\end{center}
The unnamed isomorphisms in this diagram are the unique isomorphisms built from the unit, associativity and (bi-)module constraints of the tensor category $\cat{A}$ and the bimodule category
$\cat{M}$.
From now on we will assume for simplicity that $\cat{A}$ and $\cat{M}$ are strict as tensor and bimodule category respectively, that is we will not explicitly include these isomorphisms anymore.

Conversely, let $\left( \gamma_m(a) : a \lact m \lto m \ract a^{\vee\vee} \right)_{a \in \cat{A}}$
be a natural transformation.
Then, using the universal property of the coend $(\tild{Z}(m), i_m)$, we define a morphism $\tild{Z}(m) \lto m$ by demanding it to uniquely make the following diagram commute for every $a \in \cat{A}$:
\begin{center}
\begin{tikzpicture}
	\matrix (m) [matrix of math nodes,row sep=3em,column sep=10em,minimum width=2em]{
		\tild{Z}(m)					&	m									\\
		(a \lact m) \ract a^\vee	&	(m \ract a^{\vee\vee}) \ract a^\vee	\\
	};
	\path[-stealth]
		(m-1-1)	edge node[above] {}				(m-1-2)
		(m-2-1)	edge node[right] {$i_m(a)$}		(m-1-1)
		(m-2-1)	edge node[below] {$\gamma_m(a) \ract \id{a^\vee}$}		(m-2-2)
		(m-2-2)	edge node[right] {$\id{m} \ract \ev_{a^\vee}$}			(m-1-2)
	;
\end{tikzpicture}
\end{center}

This way one obtains a bijection between $\text{Hom}_\M (\tild{Z}(m),m)$ and $\Nat{? \lact m, m\ \ract\ ?^{\vee\vee}}$.
Finally, it is straightforward to show that a morphism in $\text{Hom}_\M (\tild{Z}(m),m)$ is a $\tild{Z}$-module structure if and only if the corresponding natural transformation in $\Nat{? \lact m, m\ \ract\ ?^{\vee\vee}}$ under this bijection is invertible and satisfies the hexagon axiom of Definition \ref{def:twisted-center}.
\end{proof}

We close this section by summarising the above result and \cite[Theorem~3.3(i)]{fuchs+schaumann+schweigert} in the following theorem.

\begin{theorem} \label{thm:category-valued-trace-as-monad-modules}
Let $\cat{A}$ be a rigid, finite $\kk$-linear tensor category and let $\cat{M}$ be a finite $\cat{A}$-bimodule category.
Then the category $\lmodin{\cat{M}}{\tild{A}}$ of modules over the monad on $\cat{M}$ given by the algebra $\tild{A} = \int^{a \in \A} a \boxtimes a^\vee \in \cat{A} \boxtimes \A^\mopp$ is a category-valued trace of $\cat{M}$ together with the balanced functor
\begin{align*}
	(\tild{A} \ogreaterthan ?) : \cat{M} &\lto \lmodin{\cat{M}}{\tild{A}},	\\
	m &\longmapsto (\tild{A} \ogreaterthan m, \mu_{\tild{A}} \ogreaterthan \id{m}),
\end{align*}
whose balancing $\beta$ is determined by the commutative diagram
\begin{equation} \label{eq:balancing-via-yoneda-module-case}
\begin{tikzpicture}
	\matrix (m) [matrix of math nodes,row sep=3em,column sep=10em,minimum width=2em]{
		\Hom{\cat{M}}{m,( ({}^\vee a \lact U(M))\ract a ) \ract a^\vee}
			&	\Hom{\cat{M}}{m,(\tild{A} \ogreaterthan U(M)) \ract a^\vee}	\\
		\Hom{\cat{M}}{m,{}^\vee a \lact U(M)}	&	\Hom{\cat{M}}{m,U(M) \ract a^\vee}	\\
		\Hom{\cat{M}}{a \lact m,U(M)}	&	\Hom{\cat{M}}{m \ract a,U(M)}	\\
		\Hom{\lmodin{\cat{M}}{\tild{A}}}{\tild{A} \ogreaterthan (a \lact m), M}
			&	\Hom{\lmodin{\cat{M}}{\tild{A}}}{\tild{A} \ogreaterthan (m \ract a), M}	\\
	};
	\path[-stealth]
		(m-1-1)	edge node[below] {$((j({}^\vee a)\ogreaterthan \id{U(M)}) \ract \id{a^\vee})_*$}	(m-1-2)
		(m-2-1)	edge node[right] {$((\id{{}^\vee\! a \lact U(M)}) \ract \coev_{a})_*$}	(m-1-1)
		(m-1-2)	edge node[left] {$(\rho_M \ract \id{a^\vee})_*$}			(m-2-2)
		(m-3-1)	edge node[left] {$\cong$}			(m-2-1)
		(m-4-1)	edge node[left] {$\cong$}			(m-3-1)
		(m-4-1)	edge node[above] {$\beta_{m,a}^*$}	(m-4-2)
		(m-2-2)	edge node[left] {$\cong$}			(m-3-2)
		(m-3-2)	edge node[left] {$\cong$}			(m-4-2)
	;
\end{tikzpicture}
\end{equation}
for $a\!\in\!\cat{A}, m\!\in\!\cat{M}$ and $M\!\in\!\lmodin{\cat{M}}{\tild{A}}$.
\end{theorem}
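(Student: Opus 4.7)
My plan is to deduce the theorem by transporting the universal property of the twisted center, established in \cite[Thm.~3.3(i)]{fuchs+schaumann+schweigert}, along the canonical equivalence $\Xi \colon \lmodin{\M}{\tild{A}} \isoto \twcenter(\M)$ provided by Proposition \ref{prop:twisted-center-via-twisted-central-monad}. Since $\Xi$ strictly commutes with the forgetful functors to $\M$, this reduces the theorem to two points: (a) identifying the free module functor $F := (\tild{A} \ogreaterthan ?) \colon \M \to \lmodin{\M}{\tild{A}}$ with the left adjoint $U^\ell$ of the forgetful functor $U \colon \twcenter(\M) \to \M$ under $\Xi$, and (b) matching the balancing $\beta^F$ transported along $\Xi^{-1}$ with the one determined by the commutative diagram \eqref{eq:balancing-via-yoneda-module-case}.

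Step (a) is a general Eilenberg--Moore observation: $F$ is the canonical left adjoint of the forgetful functor on the Eilenberg--Moore category of the monad $\tild{Z}$ induced by $\tild{A}$, with unit the monad unit and counit the module-structure morphisms. Since left adjoints are essentially unique, the strict commutativity of $\Xi$ with the forgetful functors forces $\Xi \circ F \cong U^\ell$. Combined with \cite[Thm.~3.3(i)]{fuchs+schaumann+schweigert}, this already yields that $(\lmodin{\M}{\tild{A}}, F)$ is a category-valued trace of $\M$, endowed with a balancing $\beta^F$ obtained by transporting the balancing \eqref{eq:balancing-via-yoneda} of $U^\ell$ along $\Xi^{-1}$.

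For step (b) I would unwind the explicit bijection between $\tild{A}$-module structures $\rho_M$ on $U(M)$ and central structures $\gamma_{U(M)}$ spelled out in the proof of Proposition \ref{prop:twisted-center-via-twisted-central-monad}. Applied at $\leftdual{a}$ in place of $a$ and using the canonical identifications $(\leftdual{a})^\vee \cong a$ and $(\leftdual{a})^{\vee\vee} \cong a^\vee$, the formula reads $\gamma_{U(M)}(\leftdual{a}) = (\rho_M \ract \id{a^\vee}) \circ (i_{U(M)}(\leftdual{a}) \ract \id{a^\vee}) \circ (\id{\leftdual{a} \lact U(M)} \ract \coev_a)$, which is precisely the composition of the three topmost arrows in \eqref{eq:balancing-via-yoneda-module-case} after applying $\Hom{\M}{m, -}$ (recalling $i_{U(M)}(\leftdual{a}) = j(\leftdual{a}) \ogreaterthan \id{U(M)}$). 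The remaining rectangles in \eqref{eq:balancing-via-yoneda-module-case}, realizing the adjunctions $a \lact {-} \dashv \leftdual{a} \lact {-}$, $? \ract a \dashv ? \ract a^\vee$, and $F \dashv U$, are identical to the corresponding hom-set isomorphisms in \eqref{eq:balancing-via-yoneda}. Hence the transported $\beta^F$ coincides with the balancing characterized by \eqref{eq:balancing-via-yoneda-module-case}.

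The main technical point---more bookkeeping than genuine obstacle---is tracking the duality conventions when translating the $?^{\vee\vee}$-formula of Proposition \ref{prop:twisted-center-via-twisted-central-monad} into the $\leftdual{a}$-version needed in \eqref{eq:balancing-via-yoneda-module-case}; once this dictionary is set, the remaining verifications are standard Yoneda and adjunction manipulations.
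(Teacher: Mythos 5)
Your proposal is correct and follows essentially the same route as the paper's own proof: invoke \cite[Thm.~3.3(i)]{fuchs+schaumann+schweigert} to identify the universal balanced functor with the left adjoint of the forgetful functor, transport along the equivalence of Proposition \ref{prop:twisted-center-via-twisted-central-monad} (which commutes with the forgetful functors), recognize the free $\tild{A}$-module functor as that left adjoint, and translate diagram \eqref{eq:balancing-via-yoneda} into \eqref{eq:balancing-via-yoneda-module-case} via the explicit $\rho \leftrightarrow \gamma$ bijection evaluated at $\leftdual{a}$. Your step (b) in fact spells out the duality bookkeeping ($(\leftdual{a})^{\vee}\cong a$, $(\leftdual{a})^{\vee\vee}\cong a^{\vee}$) more explicitly than the paper does.
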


\begin{proof}
From \cite[Theorem~3.3(i)]{fuchs+schaumann+schweigert} it follows that the universal balanced functor for the twisted center $\twcenter(\cat{M})$ is the left adjoint of the forgetful functor $\twcenter(\cat{M}) \lto \cat{M}$.
Its balancing is determined by the commutative diagram (\ref{eq:balancing-via-yoneda}) via the Yoneda lemma.
In view of Proposition \ref{prop:twisted-center-via-twisted-central-monad} we can take the left adjoint of the forgetful functor $U : \lmodin{\cat{M}}{\tild{A}} \lto \cat{M}$ as the universal balanced functor into $\lmodin{\cat{M}}{\tild{A}} \stackrel{\text{def}}{=} \modmon{\M}{(\tild{A} \ogreaterthan ?)}$ in order to furnish the latter with the structure of category-valued trace of $\cat{M}$.
It is easy to see that the \emph{induction} functor (or \emph{free} functor) given in the statement of the theorem is the said left adjoint.
The morphisms in diagram (\ref{eq:balancing-via-yoneda-module-case}) are obtained from diagram (\ref{eq:balancing-via-yoneda}) by using the canonical category isomorphism given in the proof of Proposition \ref{prop:twisted-center-via-twisted-central-monad}.
\end{proof}

\section{Main result: The twisted center in the Hopf algebra case} \label{sec:hopf-bimodules-for-twisted-center}

In the next step we spell out Theorem \ref{thm:category-valued-trace-as-monad-modules} explicitly in the special case of the situation described in Subsection \ref{subsec:module-categories-via-algebras}.
We thus consider the tensor category $\cat{A} = \lcomod{H}$ of finite-dimensional comodules over a finite-dimensional Hopf algebra $H$ and the $(\lcomod{H})$-bimodule category $\genHopf{H}{H^\opcop}{}{B}$ induced by an $H$-$H^\opcop$-bicomodule algebra $B$, as explained in Subsection \ref{subsec:module-categories-via-algebras}.

In this concrete case it is possible to obtain an explicit description of the algebra \( \tild{A} = \int^{X \in \lcomod{H}} X \boxtimes X^\vee . \)
Let $\tild{H}$ be the algebra underlying the Hopf algebra $H$ together with the following $H$-$H^\opcop$-bi\-co\-mo\-dule structure:
\begin{align*}
	\delta^\ell_{\tild{H}} : \tild{H} & \lto H \otimes \tild{H}, & \delta^r_{\tild{H}} : \tild{H} &\lto \tild{H} \otimes H^\opcop, \\
	x &\longmapsto x_{(1)} \otimes x_{(2)}, &
	x &\longmapsto x_{(1)} \otimes S^{-1}(x_{(2)}) .
\end{align*}
This way $\tild{H}$ is an algebra in the tensor category $\genHopf{H}{H^\opcop}{}{}$. We have:

\begin{lemma} \label{lem:Htilde-as-canonical-algebra}
The $H$-$H^\opcop$-bi\-co\-mo\-dule $\tild{H}$ together with the dinatural family 
\[ \left( j_X := (\id{H} \otimes (\ev_{X} \circ \tau_{X,X^\vee}) ) \circ (\delta_X \otimes \id{X^\vee}) : X \boxtimes X^\vee \lto \tild{H} \right)_{X \in \lcomod{H}} \]
is a coend for the functor $\lcomod{H} \times (\lcomod{H})^\opp \ni (X,Y) \longmapsto X \boxtimes Y^\vee \in \genHopf{H}{H^\opcop}{}{}$, where by $\tau$ we denote the standard symmetric braiding of the underlying vector spaces and by $\boxtimes : \lcomod{H} \times (\lcomod{H})^\mopp \lto \genHopf{H}{H^\opcop}{}{}$ we denote the universal right-exact bilinear functor for the Deligne product $\genHopf{H}{H^\opcop}{}{} \simeq  \lcomod{H} \boxtimes (\lcomod{H})^\mopp$.
Furthermore, the algebra structure on $\tild{H}$ inherited from $H$ coincides with the algebra structure on the coend determined by (\ref{eq:algebra-structure-on-coend}).
\end{lemma}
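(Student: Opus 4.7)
The plan is to prove both parts of the lemma by direct calculation, using the regular left comodule $H$ as a canonical \emph{witness} for the coend. First I would verify that the family $j_X$ is dinatural: for an $H$-comodule morphism $f \colon X \to Y$, the identity $j_Y \circ (f \boxtimes \id_{Y^\vee}) = j_X \circ (\id_X \boxtimes f^\vee)$ reduces on elements to $\eta(f(x_{(0)}))\, x_{(-1)} = (f^\vee\eta)(x_{(0)})\, x_{(-1)}$, which is immediate from colinearity of $f$ together with the definition of the dual morphism. I would also check that each $j_X$ is a morphism in $\bicomod{H}{H^\opcop}$: the left $H$-coaction $\Delta$ on $\tild H$ matches the one inherited from $X$ on the first factor, and the right $H^\opcop$-coaction $x \mapsto x_{(1)} \otimes S^{-1}(x_{(2)})$ matches the right dual coaction on $X^\vee$ (which involves $S^{-1}$ because we dualize a left comodule).

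For the universal property, given any dinatural family $(f_X \colon X \boxtimes X^\vee \to Z)_X$ in $\bicomod{H}{H^\opcop}$, I would define
\[ \phi \colon \tild H \to Z , \qquad \phi(h) := f_H(h \boxtimes \eps) , \]
where $H$ carries its regular coaction $\Delta$ and $\eps$ is the counit. Two observations drive the argument. Uniqueness follows immediately from $j_H(h \boxtimes \eps) = \eps(h_{(2)})\, h_{(1)} = h$, so any $\phi$ with $\phi \circ j_H = f_H$ must agree with $f_H(?\boxtimes \eps)$. For the factorization $\phi \circ j_X = f_X$ at a general $X$, I would apply the dinaturality of $f$ to the colinear map
\[ g_\xi := (\id_H \otimes \xi) \circ \delta_X \colon X \to H , \qquad x \longmapsto \xi(x_{(0)})\, x_{(-1)} , \]
for each $\xi \in X^\vee$, and observe that $g_\xi^\vee(\eps) = \xi$ by counitality; this yields $\phi(j_X(x \boxtimes \xi)) = f_H(g_\xi(x) \boxtimes \eps) = f_X(x \boxtimes \xi)$. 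A short check that $\phi$ is itself a bicomodule map, using the fact that $f_H$ is one and that the defining formula is natural in the bicomodule structures, completes this step.

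For the algebra-structure claim, I would compute both sides of $\mu_{\tild A} \circ j^{(2)}(X,Y) = j_{X \otimes Y}$ explicitly on a typical element, tracing the Fubini isomorphism $(X \boxtimes X^\vee) \otimes (Y \boxtimes Y^\vee) \cong (X \otimes Y) \boxtimes (X \otimes Y)^\vee$ together with the rigidity isomorphism $(X \otimes Y)^\vee \cong Y^\vee \otimes X^\vee$. Both sides evaluate to $\xi(x_{(0)}) \eta(y_{(0)})\, x_{(-1)} y_{(-1)} \in H$, and the right-hand side is precisely the product $j_X(x \boxtimes \xi) \cdot j_Y(y \boxtimes \eta)$ in $H$; similarly, $j_\II$ sends $1_\kk \boxtimes 1_\kk$ to $1_H$. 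The main obstacle I expect is bookkeeping: carefully matching conventions for the right dual and the opposite monoidal structure on $(\lcomod H)^\mopp$ so that the $S^{-1}$ appearing in the right coaction on $\tild H$ is consistent with the dual right $H^\opcop$-coaction on $X^\vee$. Once these conventions are fixed, the remaining computations are routine.
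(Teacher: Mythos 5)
Your proposal is correct and follows essentially the same route as the paper: both use the regular comodule $H$ with the counit $\eps \in H^\vee$ as the witness, define $\phi(h) = f_H(h \boxtimes \eps)$, get uniqueness from $j_H(h \boxtimes \eps) = h$, and prove $\phi \circ j_X = f_X$ by applying dinaturality to the very comodule morphism $g_\xi = j_X(? \otimes \xi) : X \to H$ with $g_\xi^\vee(\eps) = \xi$. The verification of the algebra structure by direct element computation also matches the paper's argument.
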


\begin{proof}
The dinaturality property of the proposed coend follows from the fact that $H$-co\-module morphisms commute with the $H$-co-action.
It remains to show the universality of this co-wedge.
For this let $(T,\alpha)$ be an arbitrary co-wedge, \[ \alpha := (\alpha_X : X \boxtimes X^\vee \lto T)_{X \in \lcomod{H}} . \]
We have to find a unique morphism of co-wedges $\phi : \tild{H} \lto T$.
Consider the regular $H$-comodule $H$ and note that for any $h \in H$ we have
\[ j_H(h \otimes \eps) = h_{(1)} \eps(h_{2}) = h,  \]
where $\eps : H \lto \kk$ denotes the co-unit of $H$.
Thus the required morphism $\phi : \tild{H} \lto T$ of co-wedges is completely determined by its property that $\phi \circ j_X = \alpha_X$ for all $X \in \lcomod{H}$:
\[ \phi(h) = \phi(j_H(h \otimes \eps)) \stackrel{!}{=} \alpha_H(h\otimes\eps) . \]
It remains to show that this indeed gives a well-defined morphism of co-wedges:
The fact that it is a morphism of $H$-$H^\opcop$-co\-modules uses the dinaturality of $\alpha$.
Further we have to show that $\phi \circ j_X = \alpha_X$ for all $X \in \lcomod{H}$.
We have $\phi(j_X(x\otimes\varphi)) = \alpha_H(j_X(x\otimes\varphi)\otimes \eps)$ for all $x\otimes\varphi \in X\boxtimes X^\vee$.
Hence, what we have to show is that $\alpha_H(j_X(x\otimes\varphi)\otimes \eps) = \alpha_X(x \otimes \varphi)$.
This follows from the dinaturality of $\alpha$ applied to the $H$-comodule morphism $j_X(-\otimes \varphi) : X \lto H$.
Indeed, for this it suffices to show that $(j_X(-\otimes \varphi))^\vee(\eps) = \varphi$, where $(j_X(-\otimes \varphi))^\vee : H^\vee \lto X^\vee$ is the dual morphism, which follows easily from the definition of $j$.

To prove the second claim we have to show that the following diagram commutes for all $X,Y \in \lcomod{H}$.
\begin{center}
	\begin{tikzpicture}
	\matrix (m) [matrix of math nodes,row sep=3em,column sep=4em,minimum width=2em]{
		\tild{H} \otimes \tild{H}	&	\tild{H}		\\
		(X \boxtimes X^\vee) \otimes (Y \boxtimes Y^\vee)	&	(X \otimes Y) \boxtimes (Y^\vee \otimes X^\vee)	\\
	};
	\path[-stealth]
		(m-1-1)	edge node[above] {$\mu$}				(m-1-2)
		(m-2-1)	edge node[left] {$j(X) \otimes j(Y)$}	(m-1-1)
		(m-2-1)	edge node[above] {$\cong$}				(m-2-2)
		(m-2-2)	edge node[right] {$j(X \otimes Y)$}		(m-1-2)
	;
	\end{tikzpicture}
\end{center}
It is straightforward to verify that this is the case by inserting the definition of $j$.
Indeed, $j(X \otimes Y)$ involves the co-action of the tensor product $X \otimes Y$, which is given by individual co-action of $X$ and $Y$ and then multiplying in $H$.
\end{proof}

Together with Lemma \ref{lem:Htilde-as-canonical-algebra}, Theorem \ref{thm:category-valued-trace-as-monad-modules} now specialises to the following main result of this article.
For $H$ and $H'$ bialgebras and $A$ and $B$ $H$-$H'$-bicomodule algebras we denote by $\genHopf{H}{H'}{A}{B}$ the $\kk$-linear category of $A$-$B$-bimodule objects in the tensor category of $H$-$H'$-bicomodules.
We call such bimodules \emph{relative Hopf bimodules} (cf. \cite{schauenburg,bespalov+drabant} for the non-relative case $H=H'=A=B$, and cf. \cite{montgomery} for the one-sided case $H=A=\kk$).
Now we can state:

\begin{theorem} \label{thm:trace-of-bimodule-category-given-by-bicomodule-algebra}
Let $H$ be a finite-dimensional Hopf algebra over $\kk$ and let $B$ be a finite-dimensional $H$-$H^\opcop$-bico\-module algebra.
Then the category-valued trace of the bimodule category $\genHopf{H}{H^\opcop}{}{B}$ is realised by the $\kk$-linear category $\genHopf{H}{H^\opcop}{\tild{H}}{B}$ of relative $\tild{H}$-$B$-Hopf-bimodules together with the universal balanced functor
\[	\tild{H} \ogreaterthan ? = \tild{H} \otimes ? : \genHopf{H}{H^\opcop}{}{B} \lto \genHopf{H}{H^\opcop}{\tild{H}}{B},	\]
whose balancing is
\begin{align*}
	\beta_{M,X} : \tild{H} \otimes (M \ract X) &\xlongrightarrow{\sim} \tild{H} \otimes (X \lact M),	\\
	h \otimes m \otimes x &\longmapsto (h S(x_{(-1)})) \otimes x_{(0)} \otimes m,
\end{align*}
for $X \in \lcomod{H}$ and $M \in \genHopf{H}{H^\opcop}{}{B}$.
\end{theorem}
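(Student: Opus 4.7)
The plan is to specialize Theorem \ref{thm:category-valued-trace-as-monad-modules} to $\A = \lcomod{H}$ and $\M = \genHopf{H}{H^\opcop}{}{B}$, invoking Lemma \ref{lem:Htilde-as-canonical-algebra} to identify the abstract algebra $\tild{A} = \int^{X \in \lcomod{H}} X \boxtimes X^\vee$ in $\A \boxtimes \A^\mopp$ with the concrete $H$-$H^\opcop$-bicomodule algebra $\tild{H}$ introduced just before that lemma. This already realizes the category-valued trace of $\genHopf{H}{H^\opcop}{}{B}$ as the category of $\tild{H}$-modules in $\genHopf{H}{H^\opcop}{}{B}$, with the induction functor $\tild{H} \ogreaterthan ?$ as universal balanced functor.

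The first step is then to verify that this category of monad-modules is literally the relative Hopf-bimodule category $\genHopf{H}{H^\opcop}{\tild{H}}{B}$. Using the explicit left-action formula from the end of Section \ref{sec:notation-and-background}, the action $\tild{H} \ogreaterthan M$ coincides with $\tild{H} \otimes M$ as a vector space, equipped with the bicomodule structure displayed there and with the right $B$-action inherited from $M$. A module structure $\rho : \tild{H} \otimes M \to M$ in $\genHopf{H}{H^\opcop}{}{B}$ over the monad $\tild{H} \ogreaterthan ?$ is then precisely a left $\tild{H}$-module structure on $M$ compatible with the $H$-$H^\opcop$-bicomodule and right $B$-module structures, i.e.\ an object of $\genHopf{H}{H^\opcop}{\tild{H}}{B}$. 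The induction functor accordingly becomes the free Hopf-bimodule functor $M \mapsto \tild{H} \otimes M$ with its canonical module structure.

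The substantive remaining step is to read off the explicit formula for the balancing $\beta_{M,X}$ from diagram \eqref{eq:balancing-via-yoneda-module-case}. Taking in that diagram $M$ to be the free module $\tild{H} \ogreaterthan (M' \ract X)$ and chasing the identity, Yoneda reduces the problem to tracking a generic element $h \otimes m \otimes x \in \tild{H} \otimes M' \otimes X$ through three nontrivial ingredients: (i) the unit of the adjunction $? \ract X \dashv ? \ract X^\vee$, which inserts $\coev_X$ and invokes the coaction on the right dual $X^\vee$, where the antipode $S$ enters the Sweedler expressions; (ii) the map $j(\leftdual{X}) \ogreaterthan \id{U(M)}$ whose formula is given in Lemma \ref{lem:Htilde-as-canonical-algebra}; and (iii) the $\tild{H}$-module structure on the free module, which is just multiplication in $\tild{H}$. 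The factor $S(x_{(-1)})$ in the stated formula is the only nontrivial Hopf-algebraic ingredient and originates from the right-dual comodule structure on $X^\vee$.

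The main obstacle is the Sweedler-notation bookkeeping in this last step, as several adjunctions, coactions, and module/comodule structures must be composed in the correct order. As a sanity check one verifies that the resulting $\beta_{M,X}$ is a bijection with inverse $h \otimes x \otimes m \mapsto h x_{(-1)} \otimes m \otimes x_{(0)}$, both composites reducing to the identity via the antipode identity $x_{(-1)(1)} S(x_{(-1)(2)}) = \eps(x_{(-1)})$, and that it respects the ambient bicomodule, $B$-module and $\tild{H}$-module structures. The remaining balanced-functor coherence axioms follow automatically from Theorem \ref{thm:category-valued-trace-as-monad-modules}, since the Yoneda-based definition of the balancing in \eqref{eq:balancing-via-yoneda-module-case} already encodes them.
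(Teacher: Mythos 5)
Your proposal is correct and follows essentially the same route as the paper: specialize Theorem \ref{thm:category-valued-trace-as-monad-modules} via Lemma \ref{lem:Htilde-as-canonical-algebra}, identify monad modules with relative Hopf bimodules, and extract the balancing by a Yoneda chase through diagram \eqref{eq:balancing-via-yoneda-module-case}, which the paper carries out explicitly in Sweedler/string notation. One small correction: in the paper's computation the antipode enters through the comodule structure of the \emph{left} dual $\leftdual{X}$ (via the map $j(\leftdual{X})$ from Lemma \ref{lem:Htilde-as-canonical-algebra}), not through the coaction on the right dual $X^\vee$ as you suggest, but this is a bookkeeping detail that would be resolved in executing the chase.
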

\begin{proof}
The proof is done by applying Theorem \ref{thm:category-valued-trace-as-monad-modules}:
According to Theorem \ref{thm:category-valued-trace-as-monad-modules} the category-valued trace of the given bimodule category is \[ \lmodin{\left(\genHopf{H}{H^\opcop}{}{B}\right)}{\tild{H}}, \] the category of modules over the monad on $\genHopf{H}{H^\opcop}{}{B}$ given by the algebra $\tild{H} \in \genHopf{H}{H^\opcop}{}{}$.
This category is straightforwardly isomorphic to $\genHopf{H}{H^\opcop}{\tild{H}}{B}$.

To prove the claim about the balancing we let $f \in \Hom{\genHopf{H}{H^\opcop}{\tild{H}}{B}}{\tild{H} \otimes (X \lact M),N}$ for $X \in \lcomod{H}, M \in \genHopf{H}{H^\opcop}{}{B}$ and $N \in \genHopf{H}{H^\opcop}{\tild{H}}{B}$ and chase it through the diagram \eqref{eq:balancing-via-yoneda-module-case}, which, for convenience, we write out again for the present special situation:
\begin{center}
\begin{tikzpicture}
\matrix (m) [matrix of math nodes,row sep=3em,column sep=8em,minimum width=2em]{
	\Hom{\genHopf{H}{H^\opcop}{}{B}}{M,( ({}^\vee X \lact N)\ract X ) \ract X^\vee}
		&	\Hom{\genHopf{H}{H^\opcop}{}{B}}{M,(\tild{H} \otimes N) \ract X^\vee}	\\
	\Hom{\genHopf{H}{H^\opcop}{}{B}}{M,{}^\vee X \lact N}	&	\Hom{\genHopf{H}{H^\opcop}{}{B}}{M, N \ract X^\vee}	\\
	\Hom{\genHopf{H}{H^\opcop}{}{B}}{X \lact M, N}	&	\Hom{\genHopf{H}{H^\opcop}{}{B}}{M \ract X, N}	\\
	\Hom{\genHopf{H}{H^\opcop}{\tild{H}}{B}}{\tild{H} \otimes (X \lact M), N}
		&	\Hom{\genHopf{H}{H^\opcop}{\tild{H}}{B}}{\tild{H} \otimes (M \ract X), N}	\\
};
\path[-stealth]
	(m-1-1)	edge node[below] {\scalebox{0.7}{$((j({}^\vee X)\ogreaterthan \id{N}) \ract \id{X^\vee})_*$}}	(m-1-2)
	(m-2-1)	edge node[right] {$((\id{{}^\vee\! X \lact N}) \ract \coev_{X})_*$}	(m-1-1)
	(m-1-2)	edge node[right] {$(\rho_N \ract \id{X^\vee})_*$}			(m-2-2)
	(m-3-1)	edge node[right] {$\cong$}			(m-2-1)
	(m-4-1)	edge node[right] {$\cong$}			(m-3-1)
	(m-4-1)	edge node[above] {$\beta_{M,X}^*$}	(m-4-2)
	(m-2-2)	edge node[right] {$\cong$}			(m-3-2)
	(m-3-2)	edge node[right] {$\cong$}			(m-4-2)
;
\end{tikzpicture}
\end{center}

In the following computation each mapping arrow corresponds to exactly one morphism in the above diagram.
We start in the bottom left corner and go via the top to arrive in the bottom right corner.
The composition is equal to $\beta_{M,X}^* = ? \circ \beta_{M,X}$ by the commutativity of the diagram.

\begin{align*}
f &\lmapsto f \circ (1_H \otimes \id{X \otimes M}) \\
&\lmapsto (\id{\leftdual{X}}\otimes(f \circ (1_H \otimes \id{X \otimes M})))\circ(\lcoev_{X}\otimes \id{M}) \\
&\lmapsto (\id{\leftdual{X}}\otimes(f \circ (1_H \otimes \id{X \otimes M}))\otimes \coev_X)\circ(\lcoev_{X}\otimes \id{M}) \\
&\lmapsto ((j(\leftdual)\ogreaterthan \id{N})\otimes \id{X^\vee}) \circ (\id{\leftdual{X}}\otimes(f \circ (1_H \otimes \id{X \otimes M}))\otimes \coev_X)\circ(\lcoev_{X}\otimes \id{M}) \\
& \ \ \ \ \ \ \stackrel{(\ast)}{=} (( (\id{H}\otimes (\ev_X \circ \tau_{\leftdual{X},X}))\circ (\delta_{\leftdual{X}}\otimes \tau_{N,X}) )\otimes \id{X^\vee}) \\
&  \ \ \ \ \ \ \ \ \ \ \circ (\id{\leftdual{X}}\otimes(f \circ (1_H \otimes \id{X \otimes M}))\otimes \coev_X)\circ(\lcoev_{X}\otimes \id{M}) \\
& \ \ \ \ \ \ \stackrel{(\dagger)}{=} (S \otimes (f\circ (1_H \otimes \id{X \otimes M})\otimes \id{X^\vee} )) \circ (\delta_X \otimes \tau_{X^\vee,M})\circ (\coev_X \otimes \id{M}) \\
&\lmapsto (\rho_N \circ (S \otimes (f\circ (1_H \otimes \id{X \otimes M}))\otimes \id{X^\vee} )) \circ (\delta_X \otimes \tau_{X^\vee,M})\circ (\coev_X \otimes \id{M}) \\
&\lmapsto (\id{N}\otimes \ev_X) \\
&  \ \ \ \ \ \ \ \ \ \ \circ (((\rho_N \circ (S \otimes (f\circ (1_H \otimes \id{X \otimes M}))\otimes \id{X^\vee} )) \circ (\delta_X \otimes \tau_{X^\vee,M})\circ (\coev_X \otimes \id{M}))\otimes \id{X}) \\
& \ \ \ \ \ \ \stackrel{(\diamond)}{=} \rho_N \circ (S \otimes (f\circ (1_H \otimes \id{X \otimes M}))) \circ (\delta_X \otimes \id{M}) \circ \tau_{M,X} \\
&\lmapsto \rho_N \circ (\id{H} \otimes (\rho_N \circ (S \otimes (f\circ (1_H \otimes \id{X \otimes M}))) \circ (\delta_X \otimes \id{M}) \circ \tau_{M,X})) \\
& \ \ \ \ \ \ \stackrel{(\#)}{=} f \circ ((\mu_H \circ (\id{H}\otimes S))\otimes \id{X \otimes M})\circ (\id{H} \otimes \delta_X \otimes \id{M}) \circ (\id{H} \otimes \tau_{M,X}) \\
& \ \ \ \ \ \ \ = f \circ (\mu_H \otimes \id{X\otimes M}) \circ (\id{H}\otimes S\otimes \id{X \otimes M})\circ (\id{H} \otimes \delta_X \otimes \id{M}) \circ (\id{H} \otimes \tau_{M,X})
\end{align*}

In the step $(\ast)$ we only use the definition of the dinatural transformation $j$ (Lemma \ref{lem:Htilde-as-canonical-algebra}).
In step $(\dagger)$ we first use the fact that the comodule structure $\delta_{\leftdual{X}} : \leftdual{X} \to H \otimes \leftdual{X}$ of the left dual of an object $(X,\delta_X) \in \lcomod{H}$ is given by
\[ \delta_{\leftdual{X}} = \tau_{\leftdual{X},H} \circ (\id{\leftdual{X}} \otimes S \otimes \ev_{X}) \circ (\id{\leftdual{X}} \delta_X \otimes \id{\leftdual{X}}) \circ (\lcoev_X \otimes \id{X}) . \]
Then we simplify by using the zig-zag identities of duality morphisms.
In step $(\diamond)$ we also use zig-zag identities of duality morphisms.
In step $(\#)$ we use the fact that $f$ is in particular a morphism of left $H$-modules.

By commutativity of the diagram, this computation shows that $\beta_{M,X} = (\mu_H \otimes \id{X\otimes M}) \circ (\id{H}\otimes S\otimes \id{X \otimes M})\circ (\id{H} \otimes \delta_X \otimes \id{M}) \circ (\id{H} \otimes \tau_{M,X})$, which proves our claim.
\end{proof}

\bigskip
To conclude this article we discuss as an application how Theorem \ref{thm:trace-of-bimodule-category-given-by-bicomodule-algebra} together with Theorem \ref{thm:module-category-via-algebra} can be used to recover an equivalence of Yetter-Drinfeld modules and Hopf bimodules, that is closely related to the one in \cite{schauenburg, bespalov+drabant}.

For a finite-dimensional Hopf algebra $H$ over $\kk$ consider the regular $(\lcomod{H})$-bimodule category $\lcomod{H}$.
Viewing it as a left $(\genHopf{H}{H^\opcop}{}{})$-module category we can apply Theorem \ref{thm:module-category-via-algebra} to it.
Taking into account Remark \ref{rem:thm-module-category-via-algebra} the algebra occurring in that theorem can be taken to be $\intHom{\II,\II} \cong \widehat{H} \in \genHopf{H}{H^\opcop}{}{}$.
$\widehat{H}$ is the $\kk$-algebra underlying the Hopf algebra $H$, with $H$-$H^\opcop$-bicomodule structure
\begin{align*}
\delta_{\widehat{H}} : \widehat{H} &\lto H \otimes \widehat{H} \otimes H^\opcop, \\
h &\lmapsto h_{(1)} \otimes h_{(2)} \otimes S(h_{(3)}).
\end{align*}
Theorem \ref{thm:module-category-via-algebra} moreover provides an equivalence of $(\lcomod{H})$-bimodule categories
\[ ? \otimes \widehat{H} : \lcomod{H} \xlongrightarrow{\sim} \genHopf{H}{H^\opcop}{}{\widehat{H}}, \]
which turns out to be given by induction along $\widehat{H}$ (with respect to the right action and coaction), cf. \cite[Equation~(4.4)]{shimizu}.
Since this is a bimodule functor it induces an equivalence between the twisted centers,
\begin{equation} \label{eq:twisted-yetter-drinfeld-hopf-equivalence}
? \otimes \widehat{H} : \twcenter\! \left( \lcomod{H} \right) \xlongrightarrow{\sim} \twcenter\! \left( \genHopf{H}{H^\opcop}{}{\widehat{H}} \right) .
\end{equation}
By Theorem \ref{thm:trace-of-bimodule-category-given-by-bicomodule-algebra} the twisted center of $\genHopf{H}{H^\opcop}{}{\widehat{H}}$ is equivalent to the category $\genHopf{H}{H^\opcop}{\tild{H}}{\widehat{H}}$ of Hopf bimodules.
On the other hand, by Proposition \ref{prop:twisted-center-via-twisted-central-monad} the twisted center of $\lcomod{H}$ is equivalent to $\lmodin{\left( \lcomod{H} \right)}{( \tild{H}\ogreaterthan ?)}$, the category of modules over the monad $(\tild{H} \ogreaterthan ?)$ in $\lcomod{H}$.
An object $X$ in this category is a left $H$-comodule $X$ which is also a left module over the algebra $H$ satisfying the additional compatibility relation
\[ S^2 \left( h_{(1)} \cdot x_{(-1)} \right) \otimes h_{(2)}.x_{(0)} = S^2 \left( (h_{(1)}.x)_{(-1)} \right) \cdot h_{(2)} \otimes (h_{(1)}.x)_{(0)}   \qquad \text{for all } h \in H, x \in X. \]
This is a twisted variant of the Yetter-Drinfeld condition and thus we see that the equivalence \eqref{eq:twisted-yetter-drinfeld-hopf-equivalence} is a twisted variant of the equivalence of Yetter-Drinfeld modules and Hopf bimodules that was shown in \cite{schauenburg,bespalov+drabant}.
We should note, however, that in the references the equivalence is braided monoidal, whereas here we only obtain the (linear) functor itself.

\end{document}